\newtheorem{theorem}{Theorem}[section]
\newtheorem{lemma}[theorem]{Lemma}
\newtheorem{proposition}[theorem]{Proposition}
\newcommand{\Lim}[1]{\raisebox{0.5ex}{\scalebox{0.8}{$\displaystyle \lim_{#1}\;$}}}
\date{}
\begin{document}
	\title{Non-uniqueness for the ab-family of equations in Periodic case}
	\author{
		Rajan Puri }\thanks{Department
		of Mathematics and Statistics, Wake Forest University,
		Winston Salem, NC 27109, USA, (purir@wfu.edu). 
	}
	
	\date{}
	

	\begin{abstract} 
		For the cubic ab-family of equations with $a\neq0$, it is proved that there exist an initial data in the Sobolev space $H^s$, $s<3/2$, with non-unique solutions on circle. 
	\end{abstract}

	\keywords{
		Well-posedness,  
		initial value problem,  
		Cauchy problem, 
		Sobolev spaces, 
		Camassa-Holm equation,
		solitons, peakons.}
	
	\subjclass[2010]{Primary: 35Q35}
	
	\maketitle

	\section{Introduction} This paper is a continuation of our work \cite{HP} where we studied the non uniqueness for the ab-family of equations in non periodic case. The goal of this paper is to prove the same result in periodic case. We consider the Cauchy problem for the  ab- family of equations 
	\begin{align}\label{eq1}
	&u_t+u^2u_x-au_x^3+D^{-2}\partial_x\big[\frac{b}{3} u^3+\frac{6-6a-b}{2}uu_x^2\big]+D^{-2}\big[\frac{2a+b-2}{2}u_x^3\big]=0
	\\
	&u(x,0) = u_0(x), \label{eq1-data}
	\end{align}
	where $u_0(x) \in H^s (\mathbb R)$, for $s<3/2$ and $t\in \mathbb R, \ x\in \frac{\mathbb R}{2\pi \mathbb Z}$.
	The two parameters $a$, $b\in \mathbb R$ and we will assume $a\neq 0$.  $u_t$ 
	and $u_x$ denote the derivatives of $u$ with respect to $t$ and $x$, $\partial_x$ 
	denotes differentiation with respect to $x$, and the non-local operator 
	$D^{-2} = (1-\partial_x^2)^{-1}$ 
	is the inverse Fourier transform of $(1+\xi^2)^{-1} $. The local form of ab-family is given by
	\begin{equation}\label{eq111}
	\begin{split}
	(1-\partial_x^2)u_t+(b+1)u^2u_x-3au_x^3-(6a+b)uu_xu_{xx}  
	+ 6au_xu_{xx}^2-u^2u_{xxx}+3au_x^2u_{xxx}=0.  
	\end{split}  
	\end{equation}
	
	Peakon traveling wave solutions were discovered in 1978 by Fornberg and Whitham \cite{FW} and then by Camassa and Holm [CH] in their quest for a water wave model that could capture wave breaking. Peakons have discontinous spatial derivatives at their peaks so they make sense only as distributional solutions, see \cite{Ama} for details. The periodic one peakon of the ab-family (\ref{eq1}) is given by 
	$$
	u(x,t) = \pm \sqrt{c}\big(1+(1-a)\sinh^2\pi\big)^{-1/2}\cosh({[x-ct]_p-\pi}). 
	$$ 
	where $c>0$ and   where $[.]_p$ makes a quantity $2\pi$ periodic via $[x]_p=x-2\pi\lfloor{\frac{x}{2\pi}\rfloor}.$ 
	
	The well-posedness theory for the ab-family is not completely understood. Two members of ab-family (\ref{eq1}) have been studied extensively in different contexts by several researchers. In particular, the choice of parameters $a = 1/3$, $b = 2$ corresponds to the Fokas-Olver-Rosenau-Qiao (FORQ) equation derived in Fokas \cite{F}, Fuchssteiner \cite{Fu}, Olver and Rosenau \cite{OR}, and Qiao \cite{Q};
	while the choice a = 0, b = 3 gives the Novikov equation (NE) derived by Novikov \cite{N}. Himonas and Mantzavinos \cite{HMA} showed that FORQ is well-posed in $H^s$, with $s>5/2$ and this was extended to a four-parameter family in \cite{HMA2} which includes the $ab$-family. A nonuniqueness result by Himonas and Holliman \cite{AH} showed that the FORQ equation is ill-posed in $H^s$ for any $s<3/2$ for both periodic and non-periodic case. There is no theory concerning well-posedness in the gap $3/2\le s\le 5/2$. In contrast, the NE is well-posed in $H^s$ for all $s>3/2$, see \cite{BO} for details, and Himonas, Kenig and Holliman \cite{HKM} showed ill-posedness in $H^s$ for $s<3/2$. Both the ill-posedness results for the NE equation and the FORQ equation study the behavior of the solution near the time of collision of a 2-peakon solution. We use similar idea of constructing specific 2-peakon solutions and the existence of collision time to prove  non-uniqueness for the ab-family of equations in $H^s$ for all $s<3/2$ in our earlier paper \cite{HP}. Perhaps the most interesting phenomenon discovered in our paper \cite{HP}, is that multipeakon solutions to the ab-equation interact unlike classical solitons. For some values of $a$ and $b$,  we proved that our 2-peakon solutions are entangled and do not seem to separate and  smaller solitons can begin behind larger solitons, and yet, they collide. The main theorem of this paper is stated  as below.  
	\begin{theorem}\label{main-thm} For all $b \in \mathbb{R}$ and $a \neq 0$, solutions to the Cauchy problem for the ab- family of equations are not unique in $H^s$ on circle when $ s<3/2$.
	\end{theorem}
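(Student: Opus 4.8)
The plan is to prove non-uniqueness by exhibiting, from a single datum $u_0\in H^s$, two distinct weak solutions in $C([t_0,T);H^s)$; following the line-case strategy of \cite{HP}, I would build these out of a periodic peakon--antipeakon pair that collides in finite time and then exploit the symmetries of \eqref{eq1}. The ansatz is
\begin{equation*}
u(x,t)=p(t)\,g\big(x-q_1(t)\big)-p(t)\,g\big(x-q_2(t)\big),\qquad g(x)=\cosh\big([x]_p-\pi\big),
\end{equation*}
where $g$ is the periodic peakon profile appearing in the one--peakon formula of the introduction (up to normalization, the Green's function of $1-\partial_x^2$ on the circle), and the configuration is taken antisymmetric about the collision point, $q_1=\pi-\eta$, $q_2=\pi+\eta$, with half--separation $\eta\ge 0$ and equal and opposite momenta. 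The cubic homogeneity of \eqref{eq1} --- every term is cubic in $u$ --- is what makes such a peakon superposition a consistent weak solution and keeps the reduced dynamics autonomous.

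First I would insert this ansatz into the distributional formulation of \eqref{eq1}, using the local form \eqref{eq111} to compute the action of $D^{-2}$ on translated copies of $g$, and match the coefficients of the (translated) peakon and of its singular derivative so as to obtain a closed ODE system for $q_1,q_2,p$. The antisymmetry collapses this to a planar system in $(\eta,p)$. I would then seek an orbit of this system along which $\eta(t)\to 0$ in finite time: using the energy of \eqref{eq1} restricted to the two--peakon manifold as a conserved quantity, one shows $\eta$ is strictly monotone along the orbit and that $\int d\eta/|\dot\eta|$ converges, so that a finite collision time $T$ exists with $\eta(t)\to 0$ and $|p(t)|\to\infty$ as $t\to T^-$. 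The hypothesis $a\neq0$ should enter precisely here, ensuring the coefficient driving $\dot\eta$ is nondegenerate so that a genuine collision, rather than an asymptotic merger, occurs.

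To turn the collision into non-uniqueness I would use two symmetries of \eqref{eq1}. Because the nonlinearity is odd in $u$, the map $u\mapsto-u$ sends solutions to solutions; and a direct check (using that $D^{-2}$ is an even Fourier multiplier while $\partial_x$ is odd) shows that $(x,t)\mapsto(-x,-t)$ is also a symmetry, hence so is $u(x,t)\mapsto-u(-x,-t)$. Normalizing the collision to $t=0$, the colliding pair $u$ is defined for $t<0$ with $u(\cdot,t)\to0$ as $t\to0^-$; its reflection $\tilde u(x,t):=-u(-x,-t)$ is then a solution for $t>0$ with $\tilde u(\cdot,t)\to0$ as $t\to0^+$. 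I would then present the two solutions
\begin{equation*}
w_1=\begin{cases}u(x,t),& t<0\\ \tilde u(x,t),& t\ge 0\end{cases}
\qquad\text{and}\qquad
w_2=\begin{cases}u(x,t),& t<0\\ 0,& t\ge 0,\end{cases}
\end{equation*}
which share the datum $u_0=u(\cdot,t_0)$ at any $t_0<0$, both lie in $C([t_0,T);H^s)$, and differ for $t>0$ --- giving non-uniqueness. An even sharper statement, non-uniqueness from the zero datum, follows by comparing $\tilde u$ with the trivial solution on $[0,T)$.

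The step I expect to be hardest is the sharp regularity estimate at collision: one must show $\|u(\cdot,t)\|_{H^s}\to0$ as $t\to0^-$ for every $s<3/2$ despite $|p(t)|\to\infty$, the decay coming from the cancellation of the peakon against the antipeakon at $x=\pi$. Since a single periodic peakon lies in $H^s$ exactly for $s<3/2$, this estimate is critical and requires balancing the blow-up of $p(t)$ against the vanishing of $\eta(t)$ with the correct exponents; this is also what forces the threshold $3/2$ and shows it is sharp. Closely tied to it is the justification that $w_1,w_2$ are genuine weak solutions across $t=0$: because the momenta blow up, I must verify that the space--time integrals in the weak formulation of \eqref{eq1} remain finite up to the collision time and that $H^s$-continuity at $t=0$ legitimizes gluing the colliding pair to $\tilde u$ and to $0$.
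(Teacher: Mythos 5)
There is a genuine gap: your construction is modeled on the Camassa--Holm peakon--antipeakon annihilation (antisymmetric pair, momenta blowing up, profile tending to zero in $H^s$), and that mechanism does not operate for the cubic ab-family. First, the antisymmetric ansatz $p_2=-p_1$ is not preserved by the flow. From the peakon ODEs \eqref{eq5} one computes, with $h=p_2-p_1$, $w=p_1+p_2$, $z=p_1p_2$ and $q=q_2-q_1$,
\begin{equation*}
w' = (2-b)\,h z\,\sinh([q]_p-\pi)\big[\cosh\pi-\cosh([q]_p-\pi)\big],
\end{equation*}
and for $p_2=-p_1=-p\neq0$ one has $h=-2p\neq0$, $z=-p^2\neq0$, and $\cosh\pi-\cosh([q]_p-\pi)>0$ for $q\in(0,2\pi)$, so $w'\neq0$ whenever $b\neq2$ and $q\neq\pi$. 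Hence $p_1+p_2$ does not stay zero, there is no closed planar system in $(\eta,p)$, and the reflection symmetry you want to exploit at the collision is destroyed immediately. (When $b=2$ the momenta are constant, so there is no blow-up of $p$ either.) Second, and decisively, for this family the momenta do \emph{not} blow up at collision: the paper's Proposition 2.2 shows $h$, $w$, $z$ (hence $p_1,p_2$) remain bounded as long as $q\ge0$, by integrating $z'/z$, $hh'$ and $ww'$ against $q'$. The collision is ``soft'': the $H^s$ limit of $u$ as $t\to T^-$ is the single nonzero peakon $C(x)=p^*\cosh([x-q^*]_p-\pi)$ with $p^*=\lim(p_1+p_2)$, not the zero function. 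So your proposed gluings of the colliding pair to $0$ and to $-u(-x,-t)$ across the collision time do not apply, and the ``hardest step'' you identify (showing $\|u(\cdot,t)\|_{H^s}\to0$ by balancing $|p|\to\infty$ against $\eta\to0$) is a step that cannot be carried out because its premise is false here. Relatedly, the $3/2$ threshold in this problem comes simply from the fact that a single periodic peakon lies in $H^s$ exactly for $s<3/2$ (summability of $(1+n^2)^{s-2}$ in the dominated-convergence argument), not from a blow-up/vanishing balance.

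The part of your plan that does survive is the overall architecture: reduce to the peakon ODE system, prove a finite collision time by case analysis on the signs of $a$ and $b-2$ (this is where $a\neq0$ enters, through the non-vanishing of $L_a(q)=(1-a)\sinh^2\pi+(3a-1)\sinh^2([q]_p-\pi)$, which keeps $q'$ bounded away from zero), and extract non-uniqueness from the collision. But the correct endgame is the one used for FORQ and Novikov: at time $T$ the two-peakon solution and the single-peakon traveling wave share the same $H^s$ profile $C$, and (using the symmetry $u(x,t)\mapsto u(-x,-t)$ of the cubic equation to reverse time) these give two distinct solutions emanating from the same datum. You would need to replace your annihilation-and-gluing argument with this one.
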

	Our proof of nonuniqueness for the periodic case follows the same strategy as used in  the real line case \cite{HP}. The paper is organized as follows. In the next section, we study the relationship between the system of ODEs with multipeakon solutions of the ab-family of equations on circle.  Later we will prove that the collision profile is indeed a single peakon solution and use this to establish the nonuniqueness. 

	\section{The ODE System}
	
	\begin{lemma}{ \cite{Ama} } The   periodic 2-peakon  function
		\begin{equation}\label{eq4}
		u(x,t)=p_1(t)\cosh([x-q_1(t)]_p-\pi)+ p_2(t)\cosh([x-q_2(t)]_p-\pi),
		\end{equation}
		solves the ab-equation on circle if and only if the positions $q_1, q_2$ and the momenta $p_1, p_2$ satisfy the $4\times 4$ system of ordinary differential equations:
		\begin{equation} \label{eq5}
		\begin{split}
		q_1^{'} & = p_1^2[1+(1-a)\sinh ^2\pi]+2p_1p_2\cosh \pi \cosh ([q_1-q_2]_p-\pi)+p_2^2[1+(1-3a)\sinh^2([q_1-q_2]_p-\pi)],\\
		q_2^{'}& = p_2^2[1+(1-a)\sinh ^2\pi]+2p_1p_2\cosh \pi \cosh ([q_2-q_1]_p-\pi)+p_1^2[1+(1-3a)\sinh^2([q_2-q_1]_p-\pi)],\\
		p_1^{'}& = (2-b)p_1p_2 \sinh([q_1-q_2]_p-\pi)\cdot [p_1\cosh \pi +p_2 \cosh([q_1-q_2]_p-\pi)],\\
		p_2^{'}& = (2-b)p_1p_2 \sinh([q_2-q_1]_p-\pi)\cdot [p_2\cosh \pi +p_1 \cosh([q_2-q_1]_p-\pi)],
		\end{split}
		\end{equation}
		where $[.]_p$ makes a quantity $2\pi$ periodic via $[x]_p=x-2\pi\lfloor{\frac{x}{2\pi}\rfloor}.$
	\end{lemma}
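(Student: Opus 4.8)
The plan is to verify directly that the periodic 2-peakon ansatz \eqref{eq4} solves the ab-equation precisely when the ODE system \eqref{eq5} holds, by substituting the ansatz into the nonlocal form \eqref{eq1} and matching terms. The central analytic fact I would establish first is a closed-form identity for how the operator $D^{-2}=(1-\partial_x^2)^{-1}$ acts on the peakon kernel: on the circle one has $(1-\partial_x^2)\cosh([x-q]_p-\pi)=\cosh([x-q]_p-\pi)-2\sinh(\pi)\,\delta(x-q)$ in the distributional sense, so that $\cosh([x-q]_p-\pi)$ is (up to the normalizing constant $2\sinh\pi$) the periodic Green's function of $D^{-2}$. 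This identity is what converts the nonlocal equation into a pointwise statement and is the periodic analogue of the $e^{-|x|}$ kernel used in \cite{HP}.

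Next I would substitute $u(x,t)=\sum_{i=1}^2 p_i(t)\cosh([x-q_i]_p-\pi)$ into \eqref{eq1}. The time derivative $u_t$ produces terms in $p_i'$ and $q_i'$, while all spatial and nonlinear terms, after applying the Green's-function identity to collapse each $D^{-2}$, become explicit smooth functions of $x$ plus singular contributions supported at the peak locations $x=q_1,q_2$. Because $u$ has a jump in $u_x$ at each peak, derivatives like $u_x^3$ and the products $uu_xu_{xx}$ must be interpreted carefully: the correct reading is as distributions, and the matching is cleanest if I work with the nonlocal form \eqref{eq1} where the worst singularities are already regularized by $D^{-2}$. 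The strategy is then to collect, separately, the coefficient of each independent functional object — the two Dirac masses $\delta(x-q_i)$ and the two smooth profiles $\cosh([x-q_i]_p-\pi)$ — and set each coefficient to zero.

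Matching the coefficients of the Dirac masses $\delta(x-q_i)$ yields the equations for $p_i'$ (the momentum equations, governed by the factor $2-b$), while matching the smooth parts yields the equations for $q_i'$ (the position equations). I would exploit the symmetry of the ansatz under $1\leftrightarrow 2$: the $q_2',p_2'$ equations are obtained from the $q_1',p_1'$ equations by swapping indices and using $[q_2-q_1]_p-\pi=-([q_1-q_2]_p-\pi)$ modulo the periodization, so only one of each pair needs to be computed in full. The hyperbolic addition formulas for $\cosh$ and $\sinh$ applied to the arguments $[x-q_i]_p-\pi$ are what generate the cross terms $2p_1p_2\cosh\pi\cosh([q_1-q_2]_p-\pi)$ and the coefficients $1+(1-a)\sinh^2\pi$ and $1+(1-3a)\sinh^2(\cdots)$.

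The main obstacle I expect is the bookkeeping at the peaks: the periodization $[\cdot]_p$ creates kinks not only at $x=q_i$ but also at the "seam" $x=q_i+\pi$ where $\cosh([x-q]_p-\pi)$ must match up as a periodic function, and one must confirm that the ansatz is genuinely $C^0$ and $2\pi$-periodic and that no spurious singular terms arise at the seam. Controlling the precise jump of $u_x$ at each $q_i$ — which produces the delta contributions when one differentiates twice — and verifying that the $a$-dependent coefficients assemble exactly as in \eqref{eq5} rather than into some off-diagonal combination is where the computation is most delicate. Since this is the periodic counterpart of the real-line computation already carried out in \cite{HP}, I would structure the argument to isolate exactly those terms that differ from the line case (namely those involving $\sinh\pi$ and $\cosh\pi$ arising from the bounded domain) and reduce the rest to the verification in \cite{HP}.
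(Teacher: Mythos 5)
The paper itself offers no proof of this lemma; it is quoted verbatim from \cite{Ama}, so your direct-verification plan (substitute the ansatz into the nonlocal form, collapse $D^{-2}$ via the periodic Green's function, match singular and smooth parts) is the right \emph{kind} of argument and is essentially what the cited reference does. However, your central analytic identity is stated incorrectly, and since you designate it as the fact on which everything else rests, this is a genuine gap. You write $(1-\partial_x^2)\cosh([x-q]_p-\pi)=\cosh([x-q]_p-\pi)-2\sinh(\pi)\,\delta(x-q)$. This is false: on $(q,q+2\pi)$ the function equals $\cosh(x-q-\pi)$, so its classical second derivative already reproduces the function itself, and the jump of the first derivative at $x\equiv q$ is $-2\sinh\pi$. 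Hence the correct distributional identity is
\begin{equation*}
(1-\partial_x^2)\cosh([x-q]_p-\pi)=2\sinh(\pi)\,\delta(x-q),
\end{equation*}
which is consistent with the Fourier transform $\widehat{\cosh([\cdot-q]_p-\pi)}(\xi)=\tfrac{2\sinh\pi}{1+\xi^2}e^{-i\xi q}$ used later in the paper. With the identity as you wrote it, the $D^{-2}$ terms would not collapse to the kernel correctly and the coefficients $1+(1-a)\sinh^2\pi$ and $1+(1-3a)\sinh^2([q_1-q_2]_p-\pi)$ in \eqref{eq5} would not emerge. Your parenthetical remark that $\cosh([x-q]_p-\pi)$ is, up to the constant $2\sinh\pi$, the periodic Green's function of $1-\partial_x^2$ is the correct statement, but it contradicts the displayed formula.

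Two smaller points. First, your worry about a ``seam'' at $x=q_i+\pi$ is misplaced: the only non-smooth point of $\cosh([x-q_i]_p-\pi)$ per period is $x\equiv q_i$ (where the periodization jumps and the peak sits); at $x=q_i+\pi$ the function is $\cosh(0)=1$ and perfectly smooth. Second, in the nonlocal form \eqref{eq1} no Dirac masses actually appear: $u_x^3$ and $u^2u_x$ are bounded functions with jump discontinuities at the peaks, and the $D^{-2}$ terms are continuous, so the matching that produces the $p_i'$ equations is a matching of \emph{jump discontinuities} (coefficients of the $\sinh([x-q_i]_p-\pi)$-type terms), not of deltas; deltas only arise if you work with the local form \eqref{eq111}. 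Finally, the proposal remains a plan: the actual hyperbolic-addition-formula computation that produces the specific coefficients in \eqref{eq5} is never carried out, and that computation is the entire content of the lemma.
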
 
	
	The solution $u$ is a $2\pi$ periodic function and for simplicity, we will restrict our attention to the interval $[0, 2 \pi].$ We define 
	$
	q(t) \  \dot= \ q_2 - q_1,
	$
	$h \  \dot= \ p_2 - p_1$, $w\dot = p_1 +p_2$ 
	and $z \  \dot= \ p_1 p_2$.
	\begin{align}\label{qhwz-system}
	\begin{aligned}[c]
	q' &=  hw\big[(1-a) \sinh^2\pi +(3a-1)\sinh^2([q]_p-\pi)\big].,\\
	h' &= (2-b) wz \sinh([q]_p-\pi)\big[\cosh(\pi)+\cosh([q]_p-\pi) \big],\\
	w' &=(2-b) hz\sinh([q]_p-\pi)\big[\cosh(\pi)-\cosh([q]_p-\pi) \big] ,\\
	z' &= -(2-b) hwz \sinh([q]_p-\pi)\cosh([q]_p-\pi) ,
	\end{aligned}
	\qquad
	\begin{aligned}[c]
	q(0)&= q_2(0) - q_1(0) =  \mu   ,\\
	h(0)&= p_2(0) - p_1(0) = h_0,\\
	w(0) &= p_1(0) + p_2(0) = w_0, \\
	z(0)&= p_1(0)  p_2(0) = z_0,
	\end{aligned}
	\end{align}
	where  $\mu, h_0, w_0, z_0$ are any real numbers. We will assume that, $q_1 (0)= 0$ 
	and $q_2(0) = \mu >0$, and therefore, by continuity,  $q(t) \ge 0$ for some 
	$0 \le t \le T^c$, where $T^c $ is the collision time. From the system of equations (\ref{eq5}), we have 
	\begin{equation}\label{eq6}
	\begin{split}
	q^{'}=q_2^{'}- q_1^{'} & =(p_2^2-p_1^2)\big[(1-a) \sinh^2\pi +(3a-1)\sinh^2([q]_p-\pi)\big].
	\end{split}
	\end{equation}
	Additionally, we define  $p(t)=p_2^2(t)-p_1^2(t)$. Now, 
	\begin{equation}\label{eq7}
	\begin{split}
	p^{'}=2p_2p_2^{'}- 2p_1p_1^{'} & =2(2-b)p_1p_2 \sinh([q_2-q_1]_p-\pi)\big[(p_1^2+p_2^2)\cosh\pi+2p_1p_2\cosh([q_2-q_1]_p-\pi)\big].
	\end{split}
	\end{equation}
	
	If  $a=1/3$, then $\big[(1-a) \sinh^2\pi +(3a-1)\sinh^2([q]_p-\pi)\big] =2/3 \sinh^2\pi$. We set $q(0)=\mu>0$ to be a small positive number. Else, we can always find a number $c, \ 1<c<2$  such that 
	$$
	0<  \mu = \mu(a) =[\mu]_p= \pi +\sinh^{-1}\bigg(\pm \sqrt{\frac{a(c+\sinh^2\pi)-1}{3a-1}}\bigg) ,
	$$
	which implies 
	$$
	L_a(\mu) \ \dot = \ \big[(1-a) \sinh^2\pi +(3a-1)\sinh^2([\mu]_p-\pi)\big] = c a .
	$$
	We note that as $ q$ tends to $0$, $\big[(1-a) \sinh^2\pi +(3a-1)\sinh^2([q]_p-\pi)\big]$ tends to $2a\sinh^2\pi$, and therefore, the sign of this term remains constant. Also, $L_a(\mu) $ can never take the value $0$ when $a\neq 0$ on the domain $ \mu \ge 0$. 
	\begin{proposition}
		If  $a \neq 0 $, the initial value problem \eqref{qhwz-system} has a unique smooth solution on some positive time interval. Furthermore, the functions $h(t )$ and $w(t)$ and $z(t)$ remain  bounded for all $ q(t)\geq 0 $.

	\end{proposition}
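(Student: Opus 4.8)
The plan is to establish local existence and uniqueness first, then derive conserved or monotone quantities to get the uniform bounds. For existence and uniqueness, I would observe that the right-hand side of the system \eqref{qhwz-system} is a smooth (indeed real-analytic away from the periodicity jumps) function of $(q,h,w,z)$ on any interval where $[q]_p$ stays away from the point $2\pi$ where the floor function jumps. Since $q(0)=\mu>0$ and $q(t)\ge 0$ for $0\le t\le T^c$, the vector field is locally Lipschitz in a neighborhood of the initial data, so the Picard--Lindel\"of theorem yields a unique smooth solution on some maximal interval $[0,T)$. The only way smoothness can fail is at the collision, so the first assertion is immediate from standard ODE theory.

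For the boundedness of $h$, $w$, and $z$, the key is to find algebraic relations among the variables that are preserved by the flow. First I would look for a conserved quantity: multiplying and combining the equations for $h'$, $w'$, $z'$ should reveal an invariant. Indeed, one expects that the momenta satisfy a conservation law coming from the Hamiltonian structure of the peakon ODEs; in the variables $(h,w,z)$ note that $w^2-h^2 = (p_1+p_2)^2-(p_2-p_1)^2 = 4p_1p_2 = 4z$, so the three quantities are not independent. Differentiating $w^2-h^2-4z$ along the flow and checking that it vanishes identically confirms $w^2-h^2-4z\equiv w_0^2-h_0^2-4z_0$ is conserved, which ties the three functions together and reduces the problem to bounding any one of them together with controlling $z$.

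The main obstacle, and the crux of the argument, will be bounding the growth of $z$ (equivalently the product $p_1p_2$) on the interval where $q\ge 0$. Here I would exploit the explicit sign information established just before the statement: the quantity $L_a(q)=\big[(1-a)\sinh^2\pi+(3a-1)\sinh^2([q]_p-\pi)\big]$ has constant sign and is bounded away from zero on $\{q\ge 0\}$ when $a\neq 0$, and $\sinh([q]_p-\pi)$ has controlled sign as well. Using the $z'$ equation one can write $(\log|z|)' = -(2-b)hw\sinh([q]_p-\pi)\cosh([q]_p-\pi)$, and I would try to relate the right-hand side to $q'$ via \eqref{eq6}, since $q'=hw\,L_a(q)$. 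Dividing, the $hw$ factor cancels and one obtains $\frac{d\log|z|}{dq}$ as a bounded function of $q$ alone, which integrates to give $|z(t)|$ bounded by $|z_0|$ times an explicit factor depending only on the range of $q$. This change of independent variable from $t$ to $q$ is the decisive trick; once $z$ is bounded, the conserved relation $w^2-h^2=4z+\text{const}$ plus a similar $q$-integration for $h$ (or $w$) closes the estimates and yields uniform bounds for all three functions on $\{q(t)\ge 0\}$, completing the proof.
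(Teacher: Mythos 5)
Your proposal is correct and follows essentially the same route as the paper: local existence from smoothness of the right-hand side, then the change of independent variable from $t$ to $q$ (dividing each equation by $q' = hw\,L_a(q)$ so that the $hw$ factor cancels) to write $\log|z|$, and subsequently $h^2$ and $w^2$, as integrals of explicit bounded functions of $q$ over the range $q\ge 0$. Your additional observation that $w^2-h^2-4z$ is conserved (in fact it vanishes identically, since $(p_1+p_2)^2-(p_2-p_1)^2=4p_1p_2$) is a small shortcut the paper does not use --- the paper simply carries out the $q$-integration separately for both $h$ and $w$ --- but it does not change the substance of the argument.
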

	\begin{proof}
		The right hand side of the  system \eqref{qhwz-system} is smooth in the arguments $q,h,w,z$ and therefore, by the ODE theorem, has a solution on some time interval $[0, T)$, $T>0$. We define $E(x)=\cosh([x]_p-\pi)$ and $E^{'}(x)=\sinh([x]_p-\pi).$ We now derive the relations between $h, w, z$ and $q$ from (\ref{qhwz-system}). 
		\begin{align}\label{qhwz1-system}
		\begin{aligned}[c]
		q' &=  hw\big[(1-a) \sinh^2\pi +(3a-1)(E^{'}(q))^2\big],\\
		h' &= (2-b) wz E^{'}(q)\big[\cosh(\pi)+E(q) \big],\\
		w' &=(2-b) hzE^{'}(q)\big[\cosh(\pi)-E(q) \big] ,\\
		z' &= -(2-b) hwz E(q)E^{'}(q).
		\end{aligned}
		\end{align}
		We follow the same strategy as in the non-periodic case.\\
		\textbf{Expressing $z$ interms of $q$:} Beginning with $z$, from the equations (\ref{qhwz1-system}) with $z'$ and $q'$ we find 
		\begin{align*}
		\frac{z'}{z} = \frac{ -(2-b)E(q)E^{'}(q)q'}
		{ \big[(1-a) \sinh^2\pi +(3a-1)(E^{'}(q))^2\big]}.
		\end{align*}
		Therefore, we have 
		\begin{align*}
		\frac{d}{dt} \ln|z|  =  \frac{(2-b)}{2(1-3a)}  \frac{d}{dt } \ln |(1-a) \sinh^2\pi +(3a-1)(E^{'}(q))^2 | .
		\end{align*}
		If $z_0 >0$, then by continuity, we can assume $z>0$ for some time and $|z| = z$. Likewise, if $z_0 < 0$, we will assume $|z| = -z$. In either case, integrating from $0$ to $t$ yields,
		\begin{align}
		\ln \left( \frac{z(t)}{z_0} \right) =  \frac{(2-b)}{2(1-3a)} \ln \left(  \frac{ (1-a) \sinh^2\pi +(3a-1)(E^{'}(q))^2  }{  (1-a) \sinh^2\pi +(3a-1)(E^{'}(\mu))^2  }  \right) .
		\end{align} 
		We exponentiate and rearrange terms to find the expression $z$ interms of $q$
		\begin{align}\label {z-eq}
		z(t) =    z_0  \left(  \frac{ (1-a) \sinh^2\pi +(3a-1)(E^{'}(q))^2  }{ (1-a) \sinh^2\pi +(3a-1)(E^{'}(\mu))^2 }  \right)^{  \frac{(2-b)}{2(1-3a)}} .
		\end{align}
		From here we can conclude that $z(t) $ remains bounded for all $ t \in [0, T^c]$, since neither the numerator nor the denominator take the value $0$. \\
		
		\textbf{Expressing $h$ interms of $q$:} Now, we  will use the above formula for $z(t)$ to find $h(t)$. We have 
		\begin{align*}
		\frac{h'}{q'} =  \frac{(2-b) wz E^{'}(q)\big[\cosh(\pi)+E(q) \big] } {  hw\big[(1-a) \sinh^2\pi +(3a-1)(E^{'}(q))^2\big]}, 
		\end{align*} 
		or rearranging we find 
		\begin{align*}
		h h'  =  z\cdot   \frac{(2-b) E^{'}(q)\big[\cosh(\pi)+E(q) \big]q^{'} } {  \big[(1-a) \sinh^2\pi +(3a-1)(E^{'}(q))^2\big]}.
		\end{align*} 
		We substitute the formula found for $z$ in equation \eqref {z-eq} to get
		\begin{align*}
		h h'  = z_0  \left(  \frac{ (1-a) \sinh^2\pi +(3a-1)(E^{'}(q))^2  }{ (1-a) \sinh^2\pi +(3a-1)(E^{'}(\mu))^2 }  \right)^{  \frac{(2-b)}{2(1-3a)}}  \frac{(2-b) E^{'}(q)\big[\cosh(\pi)+E^(q) \big]q^{'} } {  \big[(1-a) \sinh^2\pi +(3a-1)(E^{'}(q))^2\big]} .
		\end{align*} 
		We define 
		\begin{align}\label{f-def} 
		g(q) \ \dot = z_0  \left(  \frac{ (1-a) \sinh^2\pi +(3a-1)(E^{'}(q))^2  }{ (1-a) \sinh^2\pi +(3a-1)(E^{'}(\mu))^2 }  \right)^{  \frac{(2-b)}{2(1-3a)}}  \frac{(2-b) E^{'}(q) } {  \big[(1-a) \sinh^2\pi +(3a-1)(E^{'}(q))^2\big]}, 
		\end{align} 
		and then define 
		$$
		G_1 (q) \ \dot = \ \int_0^t   (\cosh(\pi)+E(q)) g(q)  dq  . 
		$$
		Since $g(q)$ is smooth and bounded for all $q \ge 0$ (since the denominator is singular only when $a = 0$), $G(q) $ remains smooth, bounded  and differentiable for all $0\le t\le T^c$. 
		Therefore, 
		$$
		h^2   = h_0^2 + 2 G_1(q) ,
		$$
		remains bounded for all $0\le t \le T^c$.\\
		\textbf{Expressing $w$ interms of $q$:} Next we solve for $w(t)$. We rearrange the equations for $w'$ and $q'$ to find 
		\begin{align*}
		ww' =  z\cdot   \frac{(2-b) E^{'}(q)\big[\cosh(\pi)-E(q) \big]q^{'} } {  \big[(1-a) \sinh^2\pi +(3a-1)(E^{'}(q))^2\big]}  = (\cosh(\pi)-E(q) ) g(q) q',
		\end{align*}
		where we used the definition of $g(q)$ found in equation \eqref{f-def} and the formula for $z$ found in equation \eqref {z-eq}. 
		Define 
		$$
		G_2 (q) \ \dot = \ \int _0^t (\cosh(\pi)-E(q) ) g(q) dq ,
		$$
		we find 
		$$
		w^2 = w_0^2 + 2 G_2 (q),
		$$
		and similarly to $h$, $w$ remains smooth and bounded for all $0\le t \le T^c$. 
	\end{proof} 
	Since $p_1 = \frac12(w-h)$ and $p_2 = \frac12(h+w)$, the above proposition shows that as long as $ q \ge 0$, $p_1, p_2 <\infty$. We will next  choose the initial values for $p_1(0)$ and $p_2(0)$ to be consistent with the above proposition and which will necessarily lead to a collision time $T^c<\infty.$ We will show that $T^c$ exists  and find an upper bound by showing $q'(t)$ is bounded by a negative number so long as $q(t)$ remains non-negative. More precisely, we will prove the following Theorem.
	\begin{theorem} {\label{thm2}}For all $b \in \mathbb {R} $ and $a\neq 0$, there exists an initial multipeakon profile on circle such that for some $\epsilon>0$,  $\frac{dq}{dt} < -\epsilon  <0$ for $t\in [0, T^c)$ (and hence $T^c<\infty$) or there exists a time $T^p$ such that at least one of $p_1(T^p)  =0$ or $p_2(T^p) = 0$. 
	\end{theorem}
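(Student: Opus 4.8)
The plan is to reduce everything to the scalar relation \eqref{eq6}, which reads $q' = p\,L_a(q)$ with $p = p_2^2-p_1^2 = hw$ and $L_a(q) = (1-a)\sinh^2\pi+(3a-1)(E'(q))^2$, and then to exhibit initial momenta for which the right-hand side stays negative and bounded away from $0$ while $q\ge 0$. The first factor is already under control: as observed after \eqref{eq7}, $L_a$ never vanishes for $a\neq0$ on $q\ge0$ and tends to $2a\sinh^2\pi$ as $q\to0$, so on the compact interval $[0,\mu]$ it keeps the constant sign of $a$ and obeys $0<\delta\le|L_a(q)|\le\Delta$ for constants $\delta,\Delta$ depending only on $a,\mu$. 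The whole problem therefore becomes control of the amplitude factor $p=hw$.

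For that I would exploit the representations established in the preceding proposition, $h^2 = h_0^2+2G_1(q)$ and $w^2 = w_0^2+2G_2(q)$, together with the closed form \eqref{z-eq} for $z$. The key structural point is that $g$ in \eqref{f-def}, and hence $G_1,G_2$, are proportional to $z_0$: writing $G_i(q) = z_0\,\widetilde G_i(q)$, the functions $\widetilde G_i$ depend only on $a,b,\mu$ and are bounded on $[0,\mu]$, say $|\widetilde G_i|\le C$. Since $z_0 = p_1(0)p_2(0) = (w_0^2-h_0^2)/4$, I would choose $h_0$ and $w_0$ with $w_0^2 = (1+\beta)h_0^2$ for a small $\beta\in(0,2/C)$; then $|z_0| = \beta h_0^2/4$ is small enough that $h^2 = h_0^2+2G_1 \ge h_0^2(1-C\beta/2)>0$ and likewise $w^2>0$ hold for every $q\in[0,\mu]$. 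Consequently $h$ and $w$ never vanish, $p=hw$ keeps the constant sign of $h_0w_0$, and by continuity on a compact set $|p|\ge\eta$ for some $\eta>0$.

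Fixing the signs of $h_0,w_0$ so that $\sgn(h_0w_0)=-\sgn(a)$ makes $p$ and $L_a$ carry opposite signs, so $q'(t)=p(t)L_a(q(t))\le-\eta\delta<0$ for all $t$ with $q(t)\in(0,\mu]$; taking $\epsilon$ any positive number below $\eta\delta$ gives $q'<-\epsilon$. Integrating then forces $q(t)\le\mu-\epsilon t$, so $q$ reaches $0$ at a collision time $T^c\le\mu/\epsilon<\infty$, which is the first alternative and the one our construction realizes. The second alternative is exactly the degenerate complement: a momentum vanishes precisely when $z=p_1p_2$ vanishes (equivalently $w^2=h^2$), and by \eqref{z-eq} the nonzero $z_0$ we selected keeps $z$ bounded away from $0$, so in our construction no momentum dies before the collision. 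For the special value $a=1/3$ the exponent in \eqref{z-eq} is singular and must be replaced by the direct integration of $z'/z$ against $q$, but this again yields $z=z_0\times(\text{bounded, nonvanishing})$ and $g\propto z_0$, so the same argument applies verbatim.

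I expect the crux to be the uniform positivity of $h^2$ and $w^2$ on $[0,\mu]$, i.e. keeping $|p|$ bounded below. This is where the proportionality $G_i=z_0\widetilde G_i$ with $\widetilde G_i$ independent of the amplitudes is essential: it lets the coupling term $2G_i$ be made small compared with $h_0^2,w_0^2$ by shrinking $\beta$ (hence $z_0$), while the product $p=hw\approx\pm h_0^2$ stays of order $h_0^2$. Once that positivity is secured, the sign and the lower bound on $q'$ are automatic and the finiteness of $T^c$ follows immediately.
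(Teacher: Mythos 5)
Your proof is correct, but it secures the key lower bound on $|p|=|hw|$ by a genuinely different mechanism than the paper. The paper argues dynamically: it splits into four cases according to the signs of $a$ and $b-2$, chooses peakon--peakon or peakon--antipeakon data so that $\sgn(p_1p_2)$, $\sgn(b-2)$ and the (assumed) sign of $\sinh(\pi-[q]_p)$ combine with the inequality $(p_1^2+p_2^2)\cosh\pi+2p_1p_2\cosh([q]_p-\pi)\ge(p_1+p_2)^2\cosh([q]_p-\pi)\ge0$ to fix the sign of $p'$, whence $|p(t)|\ge|p(0)|$; the alternative $T^p<\infty$ is retained precisely because that sign analysis breaks down if a momentum vanishes. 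You instead take the Proposition's representations $h^2=h_0^2+2z_0\widetilde G_1(q)$, $w^2=w_0^2+2z_0\widetilde G_2(q)$ as primary and shrink $z_0$ relative to $h_0^2$ so that $h$ and $w$ can never vanish while $q\in[0,\mu]$. This is uniform in $b$ (no case split on $b\gtrless 2$, no appeal to the paper's informal ``Remarks'' about the sign of $\sinh(\pi-[q]_p)$), and---since the equation for $z$ is linear in $z$, so $z=p_1p_2$ never vanishes when $z_0\neq0$---it eliminates the $T^p$ branch altogether and yields $T^c<\infty$ outright; the price is a perturbative rather than explicit choice of data. Two points should be made explicit in a final write-up, though neither is a substantive gap: first, the bound $|L_a|\ge\delta>0$ on all of $[0,\mu]$ requires choosing the branch of $\mu$ on which $\sinh^2([q]_p-\pi)$ is monotone, so that $L_a$ stays between its nonzero endpoint values $ca$ and $2a\sinh^2\pi$ (for $a<0$ or $a\ge1$ the other sign choice in the definition of $\mu$ places a zero of $L_a$ inside the interval, so the paper's blanket non-vanishing claim should not be quoted verbatim); second, the estimates on $h^2,w^2$ hold while $q\in[0,\mu]$, and $q'<0$ is what keeps $q$ in that interval, so a one-line continuity/bootstrap argument is needed to close the loop.
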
 
	
	\begin{proof} If $\min\{T^p, T^c\} = T^p$ and $T^p<\infty$, we are done. Therefore, we will assume $p_1(t)$ and $p_2(t)$ do not equal zero. By continuity, whatever the sign of their initial data is, we may assume the solutions take as well. 
		We will prove the theorem in four cases, based upon the values of $a$ and $b$, omitting the trivial case when $b=2$. For each case, we will consider the function $p(t)$ and $q(t)$. From (\ref{eq6}) and (\ref{eq7}), 
		\begin{align}\label{eq110} 
		q'(t) &=p(t)\big[(1-a) \sinh^2\pi +(3a-1)\sinh^2([q]_p-\pi)\big],
		\\
		\label{eq10}
		p'(t)&=2(b-2)p_1p_2 \sinh(\pi-[q]_p)\big[(p_1^2+p_2^2)\cosh\pi+2p_1p_2\cosh([q]_p-\pi)\big].
		\end{align}
		
		\noindent{\bf Case 1: $ a>0, b>2$.}
		We take the initial data 
		$$
		p_1(0)=\alpha + \delta,\ q_1(0)=0, \ p_2(0)= -\alpha,\ q_2(0)=\mu.
		$$
		
		By the choice of our initial data: 
		$$p(0)=p_2^2(0)-p_1^2(0)= -(2\alpha \delta + \delta^2)<0, \text{ and } q(0)=q_2(0)-q_1(0)= \mu >0. $$ 
		Since $p_2(0)=-\alpha <0$ and $p_1(0)= \alpha + \delta >0, \ p_1(0)p_2(0) <0$ and by continuity $p_1(t)p_2(t) <0$.

		We will now show that for all $0\le t \le T^c$,  
		$$\frac{dp}{dt}=2(b-2)p_1p_2 \sinh(\pi-[q]_p)\big[(p_1^2+p_2^2)\cosh\pi+2p_1p_2\cosh([q]_p-\pi)\big]<0.$$
		Indeed, the following calculation shows that  $\big[(p_1^2+p_2^2)\cosh\pi+2p_1p_2\cosh([q]_p-\pi)\big]>0.$
		Use the fact that  $\cosh\pi > \cosh([q]_p-\pi)$    
		to compute 
		\begin{equation}
		\begin{split}
		\big[(p_1^2+p_2^2)\cosh\pi+2p_1p_2\cosh([q]_p-\pi)\big]\geq 
		& (p_1^2+p_2^2)\cosh([q]_p-\pi)+2p_1p_2 \cosh([q]_p-\pi)\\ 
		& = (p_1+p_2)^2\cosh([q]_p-\pi)\geq 0.
		\end{split}
		\end{equation}
		We notice that for all $0\le t \le T^c$ either $ \sinh(\pi-[q]_p)\geq0$ or $\sinh(\pi-[q]_p)\leq0.$  \\
		\textbf{Remarks}: We may assume the case with $ \sinh(\pi-[q]_p)\geq0$. Otherwise we may need to choose the different initial profile such that $\frac{dp}{dt}$ becomes negative by having $p_1p_2>0.$ That means we need to treat the case differently by having a initial profile (similar to the case 2 below).$$
		p_1(0)=\alpha +\delta,\ q_1(0)=0, \ p_2(0)= \alpha ,\ q_2(0)=\mu.
		$$ Either way, our method works and gives the same result. \\[.1in]
		For simplicity, we may assume $ \sinh(\pi-[q]_p)\geq0$. Therefore, $\frac{dp}{dt} <0$. Now, we may use the fact that $p(t) < p(0)<0$.  Substituting this into  equation (\ref{eq110}), we have 
		$$\frac{dq}{dt} \leq  p(0)\big[(1-a) \sinh^2\pi +(3a-1)\sinh^2([q]_p-\pi)\big] .$$
		The right hand side is negative, since initially $0< a \le L_a(\mu)  = ca \le 2a$, and as $q  $ decreases, 
		$$
		ca \le\big[(1-a) \sinh^2\pi +(3a-1)\sinh^2([q]_p-\pi)\big]  \le 2a. 
		$$
		Therefore we compute $\epsilon$:
		$$\frac{dq}{dt} \leq a p(0) = -a(2\alpha \delta + \delta^2) =-\epsilon <0 $$  Hence, either $p_1(t) = 0$, $p_2(t) = 0$, or $q(t) = 0$ in finite time.  \\[.1in]
		\textbf{Note: } We assume that $ \sinh(\pi-[q]_p)\geq0$ for all our cases. We can proof the theorem similarly with the case $ \sinh(\pi-[q]_p)\leq0$  but only difference will be to choose different initial profile as described above in the case 1. That means there exist an initial profile which guarantees the statement of the theorem. 
		
		
		\noindent{\bf Case 2: $a>0, b<2 $.} For this case, we take the two peakon initial profile:
		$$
		p_1(0)=\alpha +\delta,\ q_1(0)=0, \ p_2(0)= \alpha ,\ q_2(0)=\mu.
		$$
		By the choice of our initial data 
		$$p(0)=p_2^2(0)-p_1^2(0)= -(2\alpha \delta + \delta^2)<0, \ q(0)=q_2(0)-q_1(0)= \mu >0. $$  
		Since $ p_1(0)p_2(0) >0$, by continuity $p_1(t)p_2(t) >0$ for $t\in [0, T^c)$. We have 
		$$\frac{dp}{dt}=2(b-2)p_1p_2 \sinh(\pi-[q]_p)\big[(p_1^2+p_2^2)\cosh\pi+2p_1p_2\cosh([q]_p-\pi)\big]<0.$$
		Thus,   $p(t)<p(0) <0 \ \forall t\in [0, T^c).$ Now from the equation (\ref{eq110}),
		$$\frac{dq(t)}{dt} \leq  p(0)\big[(1-a) \sinh^2\pi +(3a-1)\sinh^2([q]_p-\pi)\big]  \ \text{for} \ t\in [0, T^{c}],$$
		and, similarly to the first case
		$$\frac{dq(t)}{dt} \leq a p(0) = -a(2\alpha \delta + \delta^2)  =-\epsilon  <0.
		$$ 
		Thus, we have shown that there exists an initial profile such that either $p_1(t) = 0$, $p_2(t) = 0$, or $q(t) = 0$ in finite time.

		
		\noindent{\bf Case 3: $a<0, b>2$.}
		Similar to Case 2, we take the two peakon initial profile:
		$$
		p_1(0)=\alpha ,\ q_1(0)=0, \ p_2(0)= \alpha +\delta,\ q_2(0)=\mu.
		$$
		By the choice of our initial data 
		$$p(0)=p_2^2(0)-p_1^2(0)= (2\alpha \delta + \delta^2)>0, \ q(0)=q_2(0)-q_1(0)= \mu >0. $$
		Also, by continuity $p_1(t)p_2(t) >0$. We will assume this hold for $t \in [0, T^c)$, thus
		$$\frac{dp}{dt}=2(b-2)p_1p_2 \sinh(\pi-[q]_p)\big[(p_1^2+p_2^2)\cosh\pi+2p_1p_2\cosh([q]_p-\pi)\big]>0.$$
		and therefore,   $p(t)>p(0)>0 $.

		Recalling the choice of $\mu$, we have $ L_a(\mu) =ca<0$ and $\Lim{q\to \ 0}L_a(q)=2a <0.$ 
		Hence, 
		\begin{equation}\label{l-est}
		2a <L_a(q(t)) < ca <0, t\in [0, T^c) .
		\end{equation}
		Therefore,
		using equation (\ref{eq110}) we have
		$$\frac{dq(t)}{dt}=  p(t)L_a(q(t) )  \leq  p(0)\cdot ca = -\epsilon<0 .
		$$
		Again, this shows that there exists an initial profile such that $p_1(t) = 0$, $p_2(t) = 0$, or $q(t) = 0$ in finite time.  
		

		\noindent{\bf Case 4: $a<0, b<2$.}
		Similar to Case 1, we take a  peakon-antipeakon initial profile:
		$$
		p_1(0)=-\alpha ,\ q_1(0)=0, \ p_2(0)= \alpha +\delta,\ q_2(0)=\mu,
		$$
		and as before, we will assume $T^c \le T^p$. By the choice of our initial data $$p(0)=p_2^2(0)-p_1^2(0)= (2\alpha \delta + \delta^2)>0, \text{ and }q(0)=q_1(0)-q_2(0)= \mu >0. $$ 
		Since $ p_1(0)p_2(0) <0$, by continuity $p_1(t)p_2(t) <0$, and therefore an argument similar to the argument presented in Case 1 shows 
		$$\frac{dp}{dt}=2(b-2)p_1p_2 \sinh(\pi-[q]_p)\big[(p_1^2+p_2^2)\cosh\pi+2p_1p_2\cosh([q]_p-\pi)\big]>0,$$
		thus $p(t)>p(0)>0 \ \forall \ t\in [0, T^c).$ 
		Therefore, using equation \eqref{eq110} and the estimate in inequality \eqref{l-est} we have again
		$$\frac{dq(t)}{dt}= p(t)L_a(q(t)) \leq  a c \cdot p(0)   = -\epsilon<0 .$$ 
		This completes the fourth case, and we have shown that for every choice of $a$ and $b$, there is an initial profile which leads to  $\min\{T^p, T^c\} < \infty$. 
	\end{proof} 
	
	\textbf{Remarks:} We can actually estimate the time of collision $T^c$ by using above theorem (\ref{thm2}) and it is given by $$T^c \leq \frac{\mu}{\epsilon}.$$
	
	\subsection{Proof of Theorem \ref{main-thm}}
	To complete the proof of Theorem \ref{main-thm}, we will show that at time, $T = \min\{T^c, T^p\}$, the solution to the Cauchy problem of ab-family of equations \eqref{eq1}-\eqref{eq1-data} with the initial profiles depending upon $a$ and $b$, is either a single peakon or the zero solution. We define the collision function: 
	\begin{equation}\label{eq18}
	C(x)=p^*\cosh([q ^*]_p-\pi),
	\end{equation}
	where if $T = T^c$, $q ^* = \Lim{t\to \ T^-} q_1(t) $ is the location of the collision and $p^{*}= \Lim{t\to \ T^-}(p_1(t)+p_2(t))$ is the magnitude of the collision. We define $q ^* = \Lim{t\to \ T^-} q_j(t) $. If $ T = T^p$, then  $\Lim{t\to \ T^-} p_i(t) = 0$, and $ \Lim{t\to \ T^-}p_j (t) \neq0$ for $i,j \in \{1,2\}$.  The choice in $q^*$ is irrelavent if both $p_1$ and $p_2$ converge to zero at time $T$, since $C(x) = 0$. 
	Now we will show that the solution converges to the collision function as shown in the following lemma. 
	\begin{lemma}
		The $H^{s}$  limit of $u$, as $t$ approaches $T$ from below is C:
		$$\lim_{t\to \ T^{-}}||u(t)-C||_{H^{s}} =0.$$ 
	\end{lemma}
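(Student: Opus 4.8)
The plan is to pass to Fourier series on the circle and exploit the fact that every peakon is a superposition of translates of the single fixed profile $\phi(x)\dot=\cosh([x]_p-\pi)$, whose Fourier coefficients can be computed in closed form. Writing $u(x,t)=p_1(t)\phi(x-q_1(t))+p_2(t)\phi(x-q_2(t))$ and $C(x)=p^*\phi(x-q^*)$, the translation rule $\widehat{\phi(\cdot-q)}_n=e^{-inq}\widehat\phi_n$ gives
\[
\widehat u_n(t)-\widehat C_n=\widehat\phi_n\,b_n(t),\qquad b_n(t)\dot=p_1(t)e^{-inq_1(t)}+p_2(t)e^{-inq_2(t)}-p^*e^{-inq^*}.
\]

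First I would compute $\widehat\phi_n$. Since $\phi''=\phi$ on $(0,2\pi)$ with a corner only at the periodic junction $x\equiv0$, two integrations by parts collapse to the exact formula $\widehat\phi_n=\dfrac{\sinh\pi}{\pi(1+n^2)}$; this is simply the statement that the peakon is the Green's function of $D^{2}=1-\partial_x^2$. Consequently $(1+n^2)^s|\widehat\phi_n|^2=\dfrac{\sinh^2\pi}{\pi^2}(1+n^2)^{s-2}$, and Parseval yields, up to a fixed positive normalization constant,
\[
\|u(t)-C\|_{H^s}^2=\frac{\sinh^2\pi}{\pi^2}\sum_{n\in\mathbb Z}(1+n^2)^{s-2}\,\big|b_n(t)\big|^2.
\]
The series $\sum_n(1+n^2)^{s-2}$ converges precisely when $s<3/2$, which is exactly our hypothesis; this is where the threshold enters, and it is the same reason $\phi\in H^s$ in the first place.

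Next I would assemble the two ingredients of a dominated-convergence argument on this series. For the dominating sequence, the Proposition guarantees that $p_1,p_2$ remain bounded up to time $T$, so $|b_n(t)|\le|p_1|+|p_2|+|p^*|\le M$ uniformly in $n$ and $t$, and $M^2(1+n^2)^{s-2}$ is summable. For the termwise limit, fix $n$ and split
\[
b_n=(p_1+p_2-p^*)e^{-inq^*}+p_1\big(e^{-inq_1}-e^{-inq^*}\big)+p_2\big(e^{-inq_2}-e^{-inq^*}\big).
\]
In the case $T=T^c$ the collision forces $q_1,q_2\to q^*$, so the last two terms vanish by boundedness of $p_1,p_2$, while $p_1+p_2\to p^*$ kills the first; in the case $T=T^p$ one momentum tends to $0$ and the surviving pair $(p_j,q_j)$ converges to $(p^*,q^*)$, so $b_n\to0$ again. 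Hence every term of the series tends to $0$.

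Finally, combining the uniform summable bound with this termwise convergence, the discrete dominated-convergence (Tannery) theorem permits interchanging $\lim_{t\to T^-}$ with $\sum_n$, whence the full sum tends to $0$ and $\|u(t)-C\|_{H^s}\to0$. The hard part is not any individual estimate but the justification of this interchange: everything rests on having a single $t$- and $n$-uniform bound on $b_n(t)$, which is exactly what the boundedness of $p_1,p_2$ from the Proposition supplies, paired with the summability of $(1+n^2)^{s-2}$ that holds if and only if $s<3/2$.
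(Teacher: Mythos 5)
Your proposal is correct and follows essentially the same route as the paper: expand $u-C$ in Fourier series using that the peakon profile is the Green's function of $1-\partial_x^2$, bound the summand uniformly by $M^2(1+n^2)^{s-2}$ using the boundedness of $p_1,p_2$ from the Proposition, invoke summability for $s<3/2$ and dominated convergence, and conclude from the termwise limit given by the definitions of $p^*$ and $q^*$. Your treatment is in fact slightly more careful than the paper's, since you make the termwise limit explicit in both the $T=T^c$ and $T=T^p$ cases, but the argument is the same.
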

	\begin{proof}
		We take the Fourier transform of $u$, and we have
		$$ \widehat{u}(\xi,t)= \sinh \pi \bigg(\frac{2p_1e^{-i\xi q_1}}{1+\xi^2}+\frac{2p_2e^{-i\xi q_2}}{1+\xi^2}\bigg).
		$$
		Similarly, we can find the Fourier transform of $C$ as
		$$ \widehat{C}(\xi)= \sinh \pi \frac{2p^{*}e^{-i\xi q^*}}{1+\xi^2}.
		$$
		Calculating the $H^{s}$ norm of $u(t)-C$ gives us
		$$\lim_{t\to \ T^{-}}||u(t)-C||_{H^{s}} ^2= 4\sinh^2 \pi \lim_{t\to \ T^{-}} \sum_{n\in Z} ({1+\xi^2})^{s-2} \big|p_1e^{-i\xi q_1}+p_2e^{-i\xi q_2}-p^{*}e^{-i\xi q^*}\big|^2.$$
		We can bound the quantity inside the absolute value by $(|p_1|+|p_2|+|p^*|) \leq M < \infty.$ Let $v(\xi)=(1+\xi^2)^{s-2}\cdot M^2$ then $v$ dominates our original summand and $v$ is itself summable when $s<3/2.$ Therefore, we may apply the Dominated Convergence Theorem and bring the limit inside the summation. 
		\begin{equation} 
		\lim_{t\to \ T^{-}}||u(t)-C||_{H^{s}} ^2=
		4\sum_{n\in Z} ({1+\xi^2})^{s-2} |p_1(T)e^{-i\xi q_1(T)}+p_2(T)e^{-i\xi q_2(T)}-p^*e^{-i\xi q^*}|^2.
		\end{equation}   
		By definition of $p^*$ and $q^*$,  the term inside the integral is zero. 
	\end{proof} 
	\section*{Acknowledgement}  
	The author like to express his sincere appreciation to John Holmes for his valuable comments and discussions.

	\bibliographystyle{plain}
	\bibliography{periodic_case}
	
\end{document}